\newtheorem{theorem}{\textit{Theorem}}
\newtheorem{lemma}{\textit{Lemma}}
\theoremstyle{remark}
\newtheorem*{remark}{\normalfont\textit{Remark}}
\newcommand{\floor}[1]{\ifxetex{\squarebracket{#1}}\else{\left\lfloor{#1}\right\rfloor}\fi}
\newcommand{\parenthesis}[1]{\ifxetex{\scaleleftright[.8ex]{(}{#1}{)}}\else{\left(#1\right)}\fi}
\newcommand{\squarebracket}[1]{\ifxetex{\scaleleftright[.8ex]{[}{#1}{]}}\else{\left[#1\right]}\fi}
\newcommand{\bigo}[1]{O\parenthesis{#1}}
\newcommand{\func}[2]{#1\parenthesis{#2}}
	\extramarks{\thetheorem}{}\addtocounter{theorem}{-1}}
\author{Masum Billal}
\title{Asymptotic Result of A Generalization of A GCD-Sum}
\begin{document}
	\pagestyle{empty}
	\maketitle
		\begin{abstract}
			In a paper in the American Mathematical Monthly, the corresponding author asks for an asymptotic of a gcd-sum function
				\begin{align}
					\sum_{ab\leq N}\tau(\gcd(a,b))\label{eqn:taugcdsum}
				\end{align}
			We extensively study generalizations of the sum in the aforementioned paper and establish asymptotic results using elementary methods only.
		\end{abstract}
	\textcite[pp. 3, Remark 1]{dudek_2016} establishes that the probability of a random integer (not necessarily prime) satisfying Euclid's lemma is almost zero; however, in doing so, the corresponding author remarks about \eqref{eqn:taugcdsum} that \textit{it would be interesting to see an asymptotic formula for this}. Theorems \ref{thm:generalizationA}--\ref{thm:asympD} will establish the asymptotic for the first generalization. As we will show, this is intimately related to the generalization of the so-called \textit{Pillai arithmetic function} studied in \textcite{pillai_1933}. There has been extensive studies on gcd-sum functions afterwards. \textcite{toth_2010} gives a survey on gcd-sum functions. \textcite{broughan_2001}, \textcite{haukkanen_2008}, \textcite{luca_thangadurai_2009}, \textcite{toth_2013} cover some discussions and generalizations on Pillai functions. However, the results we consider in this paper seem to have not been established in the literature.
	\section{Generalized Convolution}
	Consider an arithmetic function $f:\mathbb{N}^{k}\to\mathbb{R}$ in $k$ variables $x_{1},\ldots,x_{k}\in\mathbb{R}$. For brevity, let us write $f(\mathbf{x})=f(x_{1},\ldots,x_{k})$, $n^{\mathbf{s}}\leq \mathbf{x}$ denote $n^{s_{i}}\leq x_{i}$ for $s_{1},\ldots,s_{k}\leq\mathbb{R},n\in\mathbb{N}$ and $\func{f}{\frac{\mathbf{x}}{\mathbf{s}}}$ denote $\func{f}{\frac{x_{1}}{n^{s_1}},\ldots,\frac{x_{k}}{s_{k}}}$. Then we generalize the concept of generalized convolution (see \textcite[II, $\S$2.14]{apostol_1976}) as follows
		\begin{align*}
			(\alpha\circ g)(\mathbf{x},\mathbf{s})
				& = \sum_{n^{s_{i}}\leq x_{i}}\alpha(n)\func{f}{\frac{\mathbf{x}}{n^{\mathbf{s}}}}
		\end{align*}
	where $\alpha$ is an  arithmetic function. It can be easily verified that the law of associativity and inversion applies for $\circ$ as it does for generalized convolution $\circ$. That is,
		\begin{align}
			\alpha\circ(\beta\circ f)
				& = (\alpha\ast\beta)\circ f\label{eqn:associativity}
		\end{align}
	where $\alpha\ast\beta$ is the Dirichlet convolution of arithmetic functions $\alpha$ and $\beta$. Also, if $g=\alpha\circ f$ such that $\alpha(1)\neq0$ and $\alpha^{-1}$ is the Dirichlet inverse of $\alpha$ for fixed $\mathbf{x}$ and $\mathbf{s}$, then
		\begin{align}
			g
				& = \alpha^{-1}\circ f\label{eqn:geninv}
		\end{align}
	Next, we consider a variation of $\circ$.
		\begin{align*}
			(f\bullet g)(x)
				& = \sum_{n\leq x}f(n)\func{g}{\floor{\frac{x}{n}}}
		\end{align*}
	Using the fact that there are at most $2\floor{\sqrt{n}}$ distinct numbers in $\floor{\frac{x}{1}},\ldots,\floor{\frac{x}{n}}$, it can be easily shown that
		\begin{theorem}\label{thm:bullet}
			Let $f$ and $g$ be arithmetic functions such that $F(x)=\sum_{n\leq x}f(n)$. Then
				\begin{align}
					(f\bullet g)(x)
						& = \sum_{n\leq\sqrt{x}}\func{g(n)}{\func{F}{\floor{\frac{x}{n}}}-\func{F}{\floor{\frac{x}{n+1}}}}+\sum_{n\leq x/(\sqrt{x}+1)}f(n)\func{g}{\floor{\frac{x}{n}}}\label{eqn:bullet}
				\end{align}
		\end{theorem}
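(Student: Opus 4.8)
The plan is to treat \eqref{eqn:bullet} as an instance of the Dirichlet hyperbola method adapted to the floor-composition product $\bullet$. The guiding principle is the stated fact that, as $n$ runs over $1,\dots,\floor{x}$, the quantity $\floor{x/n}$ takes at most $2\floor{\sqrt{x}}$ distinct values; accordingly I would split the index set into a block where $\floor{x/n}$ is large (few indices, summed individually) and a block where $\floor{x/n}$ is small (many indices, grouped by their common value).

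First I would record the level-set identity that drives the grouping: for a fixed positive integer $m$,
\[
\{\, n : \floor{x/n} = m \,\} = \{\, n : \floor{x/(m+1)} < n \le \floor{x/m} \,\},
\]
because $\floor{x/n} = m \iff m \le x/n < m+1 \iff x/(m+1) < n \le x/m$. Summing $f$ over this interval of integers and telescoping $F$ then gives $\sum_{n:\,\floor{x/n}=m} f(n) = F(\floor{x/m}) - F(\floor{x/(m+1)})$.

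Writing $s=\floor{\sqrt x}$, I would next partition $\{1,\dots,\floor{x}\}$ according to the size of $\floor{x/n}$ relative to $s$. The indices with $\floor{x/n}\le s$ are exactly those with $n > x/(s+1)$; on this block I substitute $m=\floor{x/n}$, which now ranges over $1,\dots,s$, and apply the level-set identity to obtain $\sum_{m\le \sqrt x} g(m)\big(F(\floor{x/m})-F(\floor{x/(m+1)})\big)$, which is the first sum of \eqref{eqn:bullet} after renaming $m\to n$. The complementary indices satisfy $\floor{x/n}>s$, equivalently $n\le x/(s+1)$; on this block I leave the summand $f(n)\,g(\floor{x/n})$ untouched, producing the second sum. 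Adding the two blocks reassembles $(f\bullet g)(x)=\sum_{n\le x} f(n)\,g(\floor{x/n})$.

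The step I expect to be the main obstacle is the exact bookkeeping at the seam between the two blocks: one must check that the cutoff $m\le\sqrt x$ of the grouped block and the threshold $n\le x/(\sqrt x+1)$ of the direct block together cover $\{1,\dots,\floor{x}\}$ with no index omitted or counted twice. This is delicate precisely because the floors interact — $x/(\sqrt x+1)$, $x/(\floor{\sqrt x}+1)$ and $\floor{x/(\floor{\sqrt x}+1)}$ need not agree — so the crux is to confirm that the integer range $n>x/(s+1)$ of the grouped block abuts the integer range $n\le x/(s+1)$ of the direct block. Interpreting $\sqrt x$ as $\floor{\sqrt x}$ in the thresholds makes the two pieces tile $\{1,\dots,\floor{x}\}$ exactly, and the identity follows.
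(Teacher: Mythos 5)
Your proof is correct, and it is exactly the argument the paper leaves implicit: the paper gives no proof of this theorem beyond the remark that $\lfloor x/1\rfloor,\ldots,\lfloor x/n\rfloor$ take at most $2\lfloor\sqrt{x}\rfloor$ distinct values followed by ``it can be easily shown,'' and your decomposition --- grouping by level sets where $\lfloor x/n\rfloor\le\lfloor\sqrt{x}\rfloor$, summing directly where $\lfloor x/n\rfloor>\lfloor\sqrt{x}\rfloor$ --- is the intended realization of that hint.

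One point deserves emphasis: your insistence on reading the second threshold as $x/(\lfloor\sqrt{x}\rfloor+1)$ is not mere caution but a genuine correction to the statement as printed. With the literal threshold $x/(\sqrt{x}+1)$ the identity is false: take $x=6$ and $f\equiv g\equiv 1$, so the left-hand side is $6$, while the right-hand side is $\bigl(F(6)-F(3)\bigr)+\bigl(F(3)-F(2)\bigr)+f(1)g(6)=3+1+1=5$, because $6/(\sqrt{6}+1)\approx 1.74$ excludes $n=2$ even though $\lfloor 6/2\rfloor=3>\lfloor\sqrt{6}\rfloor=2$, so the term $f(2)g(3)$ is lost by both sums. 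Under your reading, the grouped block $n>x/(\lfloor\sqrt{x}\rfloor+1)$ and the direct block $n\le x/(\lfloor\sqrt{x}\rfloor+1)$ partition $\{1,\ldots,\lfloor x\rfloor\}$ exactly and the identity holds; this floor-sensitive reading is also the one consistent with the paper's alternative form stated after the theorem, whose correction term $\epsilon(x)$ is built around the condition $x<\lfloor\sqrt{x}\rfloor(\lfloor\sqrt{x}\rfloor+1)$.
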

	Alternatively, we can write
		\begin{align*}
			(f\bullet g)(x)
				& = \sum_{n\leq\sqrt{x}}f(n)\func{g}{\floor{\frac{x}{n}}}+\func{g(n)}{\func{F}{\floor{\frac{x}{n}}}-\func{F}{\floor{\frac{x}{n+1}}}}-\epsilon(x)
		\end{align*}
	where
		\begin{align*}
			\epsilon(x)
				& = 
				\begin{cases}
					f(\floor{\sqrt{x}})\func{g}{\floor{\frac{x}{\floor{\sqrt{x}}}}}& \mbox{ if }x<\floor{\sqrt{x}}(\floor{\sqrt{x}}+1)\\
					0& \mbox{ otherwise}
				\end{cases}
		\end{align*}
	This result is similar to Dirichlet hyperbola method. Some variations of this result have been used in the literature. For example, \textcite{deleglise_rivat_1996} uses a variation of this result to calculate $\sum_{n\leq x}\mu(n)$ efficiently. \textcite{lehman_1960} also relies on a similar result. Unfortunately, there is no proper exposition on this despite being useful in so many cases. Note that \eqref{eqn:associativity} and \eqref{eqn:geninv} applies for $\bullet$ as well.
		\begin{lemma}\label{lem:bulletasymp}
			Let $f$  be an arithmetic function such that
				\begin{align*}
					\sum_{n\leq x}f(n)
						& = O(x^{\xi})
				\end{align*}
			If $g(x)=x^{k}$, then
				\begin{align*}
					(f\bullet g)(x)
						& =
						\begin{cases}
							\bigo{x^{k+1}\log{x}}& \mbox{ if }k+1=\xi\\
							\bigo{x^{\frac{k+\xi+1}{2}}+x^{\xi}\zeta(\xi-k)}&\mbox{ if }k<\xi,k+1\neq\xi\\
							\bigo{x^{k+\frac{1}{2}}}&\mbox{ if }k=\xi\\
							\bigo{x^{\frac{k+\xi+1}{2}}}&\mbox{ if }k>\xi
						\end{cases}
				\end{align*}
		\end{lemma}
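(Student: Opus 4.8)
The plan is to feed $g(x)=x^{k}$ into the decomposition of Theorem \ref{thm:bullet}, which writes $(f\bullet g)(x)=S_1+S_2$ with
\[
S_1=\sum_{n\le\sqrt{x}}n^{k}\left(F\left(\floor{x/n}\right)-F\left(\floor{x/(n+1)}\right)\right),\qquad S_2=\sum_{n\le x/(\sqrt{x}+1)}f(n)\floor{x/n}^{k},
\]
and to bound the two pieces by entirely different mechanisms, since $S_1$ is controlled through the hypothesis $F(x)=\bigo{x^{\xi}}$ directly, whereas $S_2$ sees $f$ only through its partial sums and so must be handled by partial summation.

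First I would dispose of $S_1$ crudely. Applying the triangle inequality to each difference and inserting $|F(y)|\le Cy^{\xi}$ gives $|F(\floor{x/n})-F(\floor{x/(n+1)})|\le 2C(x/n)^{\xi}$, so that $|S_1|=\bigo{x^{\xi}\sum_{n\le\sqrt{x}}n^{k-\xi}}$. The exponent $k-\xi$ now decides everything: the power sum is $\bigo{x^{(k-\xi+1)/2}}$ when $k-\xi>-1$, is $\bigo{\log x}$ when $k-\xi=-1$ (the case $k+1=\xi$), and converges to the constant $\zeta(\xi-k)$ when $k-\xi<-1$. Multiplying back by $x^{\xi}$ produces exactly the three shapes $x^{(k+\xi+1)/2}$, $x^{k+1}\log x$, and $x^{\xi}\zeta(\xi-k)$ that appear on the right-hand side.

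Next I would treat $S_2$ by Abel summation, with $A(n)=F(n)=\bigo{n^{\xi}}$ run against the decreasing weight $b_n=\floor{x/n}^{k}$. This turns $S_2$ into a boundary term $F(M')b_{M'}$, where $M'=\floor{x/(\sqrt{x}+1)}\asymp\sqrt{x}$, plus $\sum_{n<M'}F(n)(b_n-b_{n+1})$. Because $M'\asymp\sqrt{x}$ forces $\floor{x/M'}\asymp\sqrt{x}$, the boundary term is $\bigo{x^{(k+\xi)/2}}$; and estimating the consecutive difference through the mean value theorem as $b_n-b_{n+1}=\bigo{x^{k}n^{-k-1}}$ reduces the remaining sum to $x^{k}\sum_{n\le\sqrt{x}}n^{\xi-k-1}$, whose size is decided by the sign of $\xi-k$. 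It then remains to add the $S_1$ and $S_2$ estimates and, in each of the four regimes $k+1=\xi$, $k<\xi$, $k=\xi$, $k>\xi$, retain only the governing exponent; the negligible edge term $\epsilon(x)$ from the alternative form of Theorem \ref{thm:bullet} is absorbed into $\bigo{x^{(k+\xi)/2}}$ throughout.

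The main obstacle is the estimation of $S_2$. Unlike $S_1$, it cannot be bounded termwise, since we control only the partial sums $F$ and not the individual values $f(n)$; the partial-summation step is therefore unavoidable, and its success rests on two points: a bound for the floor differences $\floor{x/n}^{k}-\floor{x/(n+1)}^{k}$ that is uniform over $n\le\sqrt{x}$, and a careful comparison, in each regime, of the resulting power sum $x^{k}\sum n^{\xi-k-1}$ and boundary term $x^{(k+\xi)/2}$ against the contribution $x^{(k+\xi+1)/2}$ coming from $S_1$, so as to isolate the dominant term. Confirming that the logarithmic and zeta anomalies fall exactly at $k+1=\xi$ and $k=\xi$, and that no crossover term is overlooked when $S_2$ is weighed against $S_1$, is the delicate bookkeeping that the proof must carry out.
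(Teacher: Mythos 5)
Your plan is, in substance, the paper's own proof: the paper also starts from Theorem \ref{thm:bullet}, bounds the $F$-difference sum termwise by $\bigo{x^{\xi}n^{k-\xi}}$ exactly as you do, and handles the $f$-sum by partial summation (the paper first expands $\floor{x/n}^{k}=(x/n)^{k}+\bigo{(x/n)^{k-1}}$ and then applies Abel summation to $\mathfrak{V}_{s}(\sqrt{x})=\sum_{n\le\sqrt{x}}f(n)n^{-s}$, while you summate by parts directly against the weights $b_{n}=\floor{x/n}^{k}$; these are mechanically equivalent). The gap sits precisely in the step you defer to ``delicate bookkeeping.'' In the regime $k>\xi$ your own (correct) estimates give $S_{2}=\bigo{x^{(k+\xi)/2}}+x^{k}\sum_{n\le\sqrt{x}}n^{\xi-k-1}$, and since $\xi-k-1<-1$ that sum converges, so $S_{2}=\bigo{x^{k}}$. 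Once $k>\xi+1$ the term $x^{k}$ is \emph{not} dominated by the claimed $x^{(k+\xi+1)/2}$, so adding $S_{1}$ and $S_{2}$ yields only $\bigo{x^{(k+\xi+1)/2}+x^{k}}$. No bookkeeping can close this, because the lemma is false in that range: take $f(1)=1$ and $f(n)=0$ for $n\ge2$, so that $F(x)=\bigo{x^{\xi}}$ with $\xi=0$; then $(f\bullet g)(x)=\floor{x}^{k}\sim x^{k}$, contradicting the claimed $\bigo{x^{(k+1)/2}}$ for every $k\ge2$. Indeed, whenever $k>\xi$ the Dirichlet series $\sum_{n\ge1}f(n)n^{-k}$ converges, and $x^{k}\sum_{n\ge1}f(n)n^{-k}$ is a genuine main term of order $x^{k}$, so $\bigo{x^{k}}$ is all that can be true in general.

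You inherited this defect from the statement rather than from your method, and it is worth seeing that the paper's own argument breaks at the same spot: after Abel summation it tabulates $\mathfrak{V}_{s}(x)=\bigo{x^{\xi-s}}$ for $s>\xi$, but by its own formula $\mathfrak{V}_{s}(x)=\bigo{x^{\xi-s}}+s\bigo{\int_{1}^{x}t^{\xi-s-1}\,dt}$, and for $s>\xi$ that integral converges, so the correct entry is $\mathfrak{V}_{s}(x)=\bigo{1}$; substituting this into \eqref{eqn:diamond} resurrects exactly the $x^{k}\mathfrak{V}_{k}(\sqrt{x})=\bigo{x^{k}}$ term your route produces. Your remaining three cases do close: for $k+1=\xi$, for $k<\xi$ with $k+1\neq\xi$, and for $k=\xi$, your $S_{2}$ bounds are $\bigo{x^{k+1/2}}$, $\bigo{x^{(k+\xi)/2}}$ and $\bigo{x^{k}\log x}$ respectively, each absorbed by the stated main term, and your $S_{1}$ analysis is identical to the paper's. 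So: same approach as the paper, three of four cases correct, and the fourth case is irreparable as stated---the honest conclusion there is $\bigo{x^{(k+\xi+1)/2}+x^{k}}$.
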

	
		\begin{proof}
			We will use the following well known results.
				\begin{align*}
					\sum_{n\leq x}\frac{1}{n}
						& = \log{x}+C+\func{O}{\frac{1}{x}}
				\end{align*}
			If $s\neq 1$ and $s>0$,
				\begin{align*}
					\sum_{n\leq x}\frac{1}{n^{s}}
						& = \frac{x^{1-s}}{1-s}+\zeta(s)+O(x^{-s})
				\end{align*}
			If $s\geq0$,
				\begin{align*}
					\sum_{n\leq x}n^{s}
						& = \frac{x^{s+1}}{s+1}+O(x^{s})
				\end{align*}
			Then
				\begin{align*}
					(f\bullet g)(x)
						& = \sum_{n\leq x}f(n)\floor{\dfrac{x}{n}}^{k}\\
						& = \sum_{n\leq \sqrt{x}}n^{k}\parenthesis{F\parenthesis{\floor{\dfrac{x}{n}}}-F\parenthesis{\floor{\dfrac{x}{n+1}}}}+f(n)\floor{\dfrac{x}{n}}^{k}-\epsilon(x)\\
						& = \sum_{n\leq \sqrt{x}}n^{k}\bigo{\floor{\dfrac{x}{n}}^{\xi}}+f(n)\floor{\dfrac{x}{n}}^{k}-\epsilon(x)\\
						& = x^{\xi}\bigo{\sum_{n\leq \sqrt{x}}\dfrac{n^{k}}{n^{\xi}}}+\sum_{n\leq \sqrt{x}}f(n)\parenthesis{\parenthesis{\dfrac{x}{n}}+O(1)}^{k}-\epsilon(x)\\
						& = x^{\xi}\bigo{\sum_{n\leq \sqrt{x}}\dfrac{n^{k}}{n^{\xi}}}+\sum_{n\leq \sqrt{x}}f(n)\parenthesis{\parenthesis{\dfrac{x}{n}}^{k}+\bigo{\parenthesis{\dfrac{x}{n}}^{k-1}}}-\epsilon(x)\\
						& = x^{\xi}\bigo{\sum_{n\leq \sqrt{x}}\dfrac{n^{k}}{n^{\xi}}}+x^{k}\sum_{n\leq \sqrt{x}}\dfrac{f(n)}{n^{k}}+\bigo{x^{k-1}\sum_{n\leq \sqrt{x}}\dfrac{f(n)}{n^{k-1}}}-\epsilon(x)
				\end{align*}
			Now we need to focus on the following two sums.
				\begin{align*}
					\mathfrak{U}_{s}(x)
					& = \sum_{n\leq x}\dfrac{n^{s}}{n^{\xi}}\\
					\mathfrak{V}_{s}(x)
					& = \sum_{n\leq x}\dfrac{f(n)}{n^{s}}
				\end{align*}
			where $s\geq1$. Then
				\begin{align}
					(f\bullet g)(x)
					& = x^{\xi}\bigo{\mathfrak{U}_{k}(\sqrt{x})}+x^{k}\mathfrak{V}_{k}(\sqrt{x})+\bigo{x^{k-1}\mathfrak{V}_{k-1}(\sqrt{x})}-\mathfrak{B}(x)\label{eqn:diamond}
				\end{align}
			If $s+1<\xi$,
				\begin{align}
					\mathfrak{U}_{s}(x)
					& = \sum_{n\leq x}\dfrac{1}{n^{\xi-s}}\nonumber\\
					& = \dfrac{x^{1+s-\xi}}{1+s-\xi}+\zeta(\xi-s)+\bigo{x^{s-\xi}}\label{eqn:1}
				\end{align}
			If $s+1=\xi$,
				\begin{align*}
					\mathfrak{U}_{s}(x)
					& = \log{x}+C+\bigo{\dfrac{1}{x}}
				\end{align*}
			The case $\xi-1<s<\xi$ is possible if and only if $\xi$ is not an integer and $s=\floor{\xi}$ which can be taken care of in the same manner as \eqref{eqn:1}.  We can now assume $s\geq\xi$. In this case, $s-\xi\geq0$ and
				\begin{align*}
					\mathfrak{U}_{s}(x)
					& = \sum_{n\leq x}n^{s-\xi}\\
					& = \dfrac{x^{s-\xi+1}}{s-\xi+1}+\bigo{x^{s-\xi}}\label{eqn:6}\tag{6}
				\end{align*}
			For handling $\mathfrak{V}$, we will use Abel's partial summation formula.
			\begin{align*}
				\mathfrak{V}_{s}(x)
				& = \dfrac{F(x)}{x^{s}}+s\int_{1}^{x}F(t)t^{-s-1}dt\\
				& = \bigo{x^{\xi-s}}+s\bigo{\int_{1}^{x}t^{\xi-s-1}dt}
			\end{align*}
			Thus, we have
			\begin{align*}
				\mathfrak{U}_{s}(x),\mathfrak{V}_{s}(x)
				& = 
				\begin{cases}
					\log{x}+C+\bigo{\dfrac{1}{x}},\bigo{x}& \mbox{ if }s+1=\xi\\
					\dfrac{x^{1+s-\xi}}{1+s-\xi}+\zeta(\xi-s)+\bigo{x^{s-\xi}},\bigo{x^{\xi-s}}& \mbox{ if }s<\xi\mbox{ and }s+1\neq\xi\\
					x+\bigo{1},\bigo{\log{x}}& \mbox{ if }s=\xi\\
					\dfrac{x^{s-\xi+1}}{s-\xi+1}+\bigo{x^{s-\xi}}, \bigo{x^{\xi-s}}& \mbox{ if }s>\xi
				\end{cases}
			\end{align*}
			Plugging these back in \eqref{eqn:diamond}, we get the result.
		\end{proof}
	
		\begin{remark}
			The result in \autoref{thm:bullet} is a lot sharper than the trivial bound as noted below.
				\begin{align*}
					\sum_{n\leq x}f(n)\floor{\frac{x}{n}}^{k}
						& = \sum_{n\leq x}\func{f(n)}{\func{}{\frac{x}{n}}^{k}+\bigo{\func{}{\frac{x}{n}}^{k}}}\\
						& = x^{k}\sum_{n\leq x}\frac{f(n)}{n^{k}}+\bigo{x^{k-1}\sum_{n\leq x}\frac{f(n)}{n^{k-1}}}\\
				\end{align*}
			By Abel partial summation formula,
				\begin{align*}
					\sum_{n\leq x}\frac{f(n)}{n^{k}}
						& = \bigo{x^{\xi}}x^{-k}+k\int_{1}^{x}t^{\xi}t^{-k-1}dt\\
						& = \bigo{x^{\xi-k}}
				\end{align*}
			Thus we get the weaker bound $(f\bullet g)(x) = \bigo{x^{\xi}}$.
		\end{remark}
	\section{Asymptotic of A Generalization}
	Consider a generalization of the sum in \eqref{eqn:taugcdsum}.
		\begin{align*}
			\mathfrak{A}_{k}(x)
				& = \sum_{x_{1}\cdots x_{k}\leq x}F(\gcd(x_{1},\ldots,x_{k}))\\
		\end{align*}
	Fix a tuple $(x_{1},\ldots,x_{k})$ such that $\gcd(x_{1},\ldots,x_{k})=g$. Then running $g$ from $1$ to $x$,
		\begin{align*}
			\mathfrak{A}_{k}(x)
				& = \sum_{g\leq x}\sum_{\substack{x_{1}\cdots x_{k}\leq x\\\gcd(x_{1},\ldots,x_{k})=g}}F(g)
		\end{align*}
	Letting $x_{i}=y_{i}g$ with $\gcd(y_{1},\ldots,y_{k})=1$. 
		\begin{align*}
			\mathfrak{A}_{k}(x)
				& = \sum_{g\leq x}\sum_{g^{k}y_{1}\cdots y_{k}\leq x}F(g)\\
				& = \sum_{g^{k}\leq x}F(g)\sum_{\substack{y_{1}\cdots y_{k}\leq x/g^{k}\\\gcd(y_{1},\ldots,y_{k})=1}}1\\
				& = \sum_{g^{k}\leq x}F(g)\func{\mathfrak{B}_{k}}{\frac{x}{g^{k}}}
		\end{align*}
	Therefore, $\mathfrak{A}=F\circ\mathfrak{B}$ where
		\begin{align*}
			\mathfrak{B}_{k}(x)
				& = \sum_{\substack{x_{1}\cdots x_{k}\leq x\\\gcd(x_{1},\ldots,x_{k})=1}}1
		\end{align*}
	Letting $\mathfrak{T}_{k}(x)=\sum_{n\leq x}\tau_{k}(n)$ where
		\begin{align*}
			\tau_{k}(n)
				& = \sum_{x_{1}\cdots x_{k}=n}1
		\end{align*}
	we get the following using the principle of inclusion and exclusion
		\begin{align*}
			\mathfrak{B}_{k}(x)
				& = \sum_{n^{k}\leq x}\mu(n)\func{\mathfrak{T}}{\floor{\frac{x}{n^{k}}}}
		\end{align*}
	Then, we have the following by \eqref{eqn:geninv}.
		\begin{theorem}\label{thm:generalizationA}
			Let $F$ be an arithmetic function and $f$ be the M\"{o}bius inverse of $F$. Then $\mathfrak{A}=f\circ \mathfrak{T}$.
		\end{theorem}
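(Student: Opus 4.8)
The plan is to read the theorem directly off the two structural identities already assembled above, together with the associativity law \eqref{eqn:associativity}. First I would observe that the inclusion–exclusion expression
\[
	\mathfrak{B}_{k}(x)=\sum_{n^{k}\leq x}\mu(n)\func{\mathfrak{T}}{\floor{\frac{x}{n^{k}}}}
\]
is precisely the statement $\mathfrak{B}=\mu\circ\mathfrak{T}$ in the generalized convolution notation, taking the single exponent $s=k$. Here the floor is immaterial: since $\mathfrak{T}_{k}(y)=\sum_{m\leq y}\tau_{k}(m)$ depends only on $\floor{y}$, we have $\func{\mathfrak{T}}{\floor{x/n^{k}}}=\func{\mathfrak{T}}{x/n^{k}}$, so the sum is a clean instance of $\circ$.

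Next I would combine this with the relation $\mathfrak{A}=F\circ\mathfrak{B}$ derived just before the statement, giving $\mathfrak{A}=F\circ(\mu\circ\mathfrak{T})$. Applying the associativity law \eqref{eqn:associativity} with $\alpha=F$ and $\beta=\mu$ collapses the nested convolution into a single one,
\[
	\mathfrak{A}=F\circ(\mu\circ\mathfrak{T})=(F\ast\mu)\circ\mathfrak{T}.
\]
Finally, since $f$ is the M\"{o}bius inverse of $F$, we have $F(n)=\sum_{d\mid n}f(d)$, equivalently $f=F\ast\mu$ by M\"{o}bius inversion, and hence $\mathfrak{A}=f\circ\mathfrak{T}$, as claimed.

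The argument is almost entirely bookkeeping, so the only real point requiring care is the bookkeeping itself. The one genuine hinge is that the generalized convolution $\circ$ obeys associativity in the stated multivariate form; this is asserted in \eqref{eqn:associativity}, and I would simply invoke it rather than reprove it. Beyond that, the two things I would double-check are the convention for ``M\"{o}bius inverse'' — fixing $f=F\ast\mu$ so that the Dirichlet factor produced by associativity matches $f$ exactly — and the harmless replacement of $\func{\mathfrak{T}}{\floor{x/n^{k}}}$ by $\func{\mathfrak{T}}{x/n^{k}}$, which is what lets the inclusion–exclusion sum be recognized as $\mu\circ\mathfrak{T}$ in the first place.
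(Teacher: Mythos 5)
Your proposal is correct and matches the paper's own argument, which likewise reads the theorem directly off the identities $\mathfrak{A}=F\circ\mathfrak{B}$ and $\mathfrak{B}=\mu\circ\mathfrak{T}$ and collapses them using the algebraic properties of $\circ$ (the paper cites \eqref{eqn:geninv}, which is itself just the inversion consequence of the associativity law \eqref{eqn:associativity} you invoke, together with $f=F\ast\mu$). Your explicit attention to the floor being immaterial and to the Möbius-inverse convention only spells out bookkeeping the paper leaves implicit.
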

	It is well known that (e.g. see \textcite{landau_1912_0})
		\begin{align}
			\mathfrak{T}_{k}(x)
				& = \frac{1}{(k-1)!}x\log^{k-1}{x}+O(x\log^{k-2}{x})\label{eqn:Tk}
		\end{align}
	Then
		\begin{align*}
			\mathfrak{A}_{k}(x)
				& = \sum_{n^{k}\leq x}f(n)\func{\mathfrak{T}_{k}}{\frac{x}{n^{k}}}\\
				& = \sum_{n^{k}\leq x}\func{f(n)}{\frac{1}{(k-1)!}\frac{x}{n^{k}}\log^{k-1}\frac{x}{n^{k}}+\bigo{x\sum_{n^{k}\leq x}\log^{k-2}\frac{x}{n^{k}}}}\\
				& = \dfrac{x\log^{k-1}x}{(k-1)!}\sum_{n\leq\sqrt[k]{x}}\dfrac{f(n)}{n^{k}}-\frac{xk}{(k-1)!}\sum_{n\leq\sqrt[k]{x}}\frac{f(n)\log^{k-1}n}{n^{k}}+\bigo{x\log^{k-2}x\sum_{n\leq\sqrt[k]{x}}\frac{f(n)}{n^{k}}}
		\end{align*}
	Taking $x\to\infty$, we finally have the asymptotic of $\mathfrak{A}$.
		\begin{theorem}\label{thm:asympA}
			Let $k\geq2$ be a positive integer. Then
				\begin{align*}
					\mathfrak{A}_{k}(x)
					& = \frac{x\log^{k-1}x}{(k-1)!}D_{k}(f)-\frac{xk}{(k-1)!}D_{k}^{k-1}(f)+\bigo{xD_{k}(f)\log^{k-2}x}
				\end{align*}
			where $D_{k}^{n}(f)$ is the $n$-th derivative of the Dirichlet series $D_{k}(f)$.
		\end{theorem}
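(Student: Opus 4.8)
The derivation preceding the statement has already reduced the problem to the finite-$x$ identity
\[
\mathfrak{A}_{k}(x) = \frac{x\log^{k-1}x}{(k-1)!}\sum_{n\leq\sqrt[k]{x}}\frac{f(n)}{n^{k}}-\frac{xk}{(k-1)!}\sum_{n\leq\sqrt[k]{x}}\frac{f(n)\log^{k-1}n}{n^{k}}+\bigo{x\log^{k-2}x\sum_{n\leq\sqrt[k]{x}}\frac{f(n)}{n^{k}}},
\]
so the entire content of the theorem is the passage $x\to\infty$ inside the three truncated sums. The plan is to replace each partial sum by the value at $s=k$ of the Dirichlet series $D(f)(s)=\sum_{n\geq1}f(n)n^{-s}$ (or of one of its derivatives) and then to verify that the discarded tails are dominated by the surviving error term.

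First I would secure the analytic input. Because $k\geq2$, the hypothesis on $F$ (equivalently on its M\"{o}bius inverse $f=F\ast\mu$) should place the abscissa of absolute convergence of $D(f)$ strictly below $k$; in the motivating case $F=\tau$ one has $f=\mathbf{1}$ and $D(f)=\zeta$, so this holds for every $k\geq2$. Absolute convergence in a half-plane containing $s=k$ makes $D_{k}(f):=D(f)(k)$ finite and $D(f)$ holomorphic near $s=k$, which licenses term-by-term differentiation:
\[
D^{(j)}(f)(s)=\sum_{n\geq1}f(n)(-\log n)^{j}n^{-s}, \qquad D_{k}^{k-1}(f)=(-1)^{k-1}\sum_{n\geq1}\frac{f(n)\log^{k-1}n}{n^{k}}.
\]
This identifies the limit of the second truncated sum with $D_{k}^{k-1}(f)$; the factor $(-1)^{k-1}$ produced by differentiating $n^{-s}$ is the one piece of bookkeeping to keep straight, and I would either carry it explicitly or fold it into the convention for $D_{k}^{k-1}(f)$ so that the coefficient emerges as the stated $-\tfrac{xk}{(k-1)!}$.

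Next I would estimate the tails. Writing $\sum_{n\leq\sqrt[k]{x}}=\sum_{n\geq1}-\sum_{n>\sqrt[k]{x}}$ and using absolute convergence, the tail of $\sum f(n)n^{-k}$ is $\bigo{x^{(\sigma-k)/k}}$ for some $\sigma<k$, and the logarithmically weighted tail is the same up to a factor $\log^{k-1}x$. Multiplying by the prefactors $x\log^{k-1}x$ and $x$ gives errors of size $\bigo{x^{\sigma/k}\log^{k-1}x}$, and since $\sigma<k$ forces $\sigma/k<1$ this is $o(x\log^{k-2}x)$ for every $k\geq2$, hence absorbed. In the $O$-term itself the truncated sum may simply be replaced by its bounded limit $D_{k}(f)$, turning the error into $\bigo{xD_{k}(f)\log^{k-2}x}$, and collecting what survives reproduces the three displayed terms.

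The step I expect to be the genuine obstacle is precisely the analytic input of the second paragraph: the clean statement silently presupposes that both $D_{k}(f)$ and the derivative $D_{k}^{k-1}(f)$ exist, that is, that $f$ does not grow too fast, and a fully honest proof must either add this as a standing hypothesis on $F$ or read it off from the construction $f=F\ast\mu$. Everything else—the tail bounds and the identification of the weighted sum with a derivative of $D(f)$—is routine once convergence and the sign convention are pinned down.
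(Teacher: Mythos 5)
Your proposal follows exactly the paper's route: the paper's entire proof of this theorem is the computation displayed just before it (substituting Landau's asymptotic \eqref{eqn:Tk} for $\mathfrak{T}_{k}$ and expanding $\log^{k-1}(x/n^{k})$), capped by the single phrase ``taking $x\to\infty$.'' Your tail estimates, the identification of the weighted partial sum with $(-1)^{k-1}D_{k}^{k-1}(f)$, and the observation that the statement silently requires the abscissa of convergence of $D(f)$ to lie below $k$ are precisely the details the paper leaves implicit, so this is the same argument carried out more carefully rather than a different one.
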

	For the special case when $f(n)=\tau(n)$, we have $\mu\ast\tau=1$. Then we have the following.
		\begin{theorem}\label{thm:asympT}
			Let $k\geq2$ be a positive integer. Then
				\begin{align*}
					\sum_{x_{1}\cdots x_{k}\leq x}\tau(\gcd(x_{1},\ldots,x_{k}))
						& = \frac{x\log^{k-1}x}{(k-1)!}\zeta(k)-\frac{xk}{(k-1)!}\zeta^{k-1}(k)+O(x\zeta^{k-1}(k)\log^{k-2}x)
				\end{align*}
		\end{theorem}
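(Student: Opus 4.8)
The plan is to derive Theorem~\ref{thm:asympT} as an immediate specialization of Theorem~\ref{thm:asympA}; once the correct arithmetic function is identified, everything reduces to two closed-form evaluations of a Dirichlet series at $s=k$. The sum in the statement is exactly $\mathfrak{A}_{k}(x)$ with $F=\tau$, so Theorem~\ref{thm:generalizationA} gives $\mathfrak{A}=f\circ\mathfrak{T}$, where $f=\mu\ast\tau$ is the M\"{o}bius inverse of $\tau$.

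The first and decisive step is to simplify this $f$. Writing the divisor function as $\tau=1\ast1$ and using $\mu\ast1=\epsilon$, where $\epsilon$ denotes the identity for Dirichlet convolution, associativity yields $f=\mu\ast\tau=(\mu\ast1)\ast1=\epsilon\ast1=1$, the constant function. This is precisely the observation $\mu\ast\tau=1$ recorded before the statement, and it is the crux of the argument: it replaces the generic coefficients of Theorem~\ref{thm:asympA} by data attached to $\zeta$ itself rather than to $\zeta^{2}$, since $D(f)(s)=D(\mu)(s)D(\tau)(s)=\zeta(s)^{-1}\zeta(s)^{2}=\zeta(s)$.

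With $f=1$ in hand, I would evaluate the Dirichlet-series quantities. The series of $f=1$ is $\sum_{n\geq1}n^{-s}=\zeta(s)$, which converges at $s=k$ since $k\geq2$, so $D_{k}(f)=\zeta(k)$. Differentiating termwise $k-1$ times in $s$ via $\frac{d^{m}}{ds^{m}}n^{-s}=(-\log n)^{m}n^{-s}$ and evaluating at $s=k$ identifies $D_{k}^{k-1}(f)$ with $\zeta^{k-1}(k)$, the $(k-1)$-th derivative of the Riemann zeta function at $k$; this matches the $\sum_{n}f(n)\log^{k-1}n/n^{k}$ term appearing in the derivation that precedes Theorem~\ref{thm:asympA}, so the only care needed is with the sign convention built into that derivative.

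Substituting $D_{k}(f)=\zeta(k)$ and $D_{k}^{k-1}(f)=\zeta^{k-1}(k)$ into the asymptotic of Theorem~\ref{thm:asympA} then produces the stated formula directly, the error term being inherited from that theorem (for fixed $k$ its constant is a fixed function of $k$ and hence immaterial to the $O$-bound). I anticipate no genuine obstacle: the whole statement is a specialization, and the single point demanding attention is the verification $\mu\ast\tau=1$, which is exactly what makes the leading constants explicit.
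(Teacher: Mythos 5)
Your proposal is correct and follows exactly the paper's route: the paper proves Theorem~\ref{thm:asympT} by the single observation that for $F=\tau$ the M\"{o}bius inverse is $f=\mu\ast\tau=1$, whence $D_{k}(f)=\zeta(k)$ and $D_{k}^{k-1}(f)=\zeta^{k-1}(k)$ in Theorem~\ref{thm:asympA}. Your write-up merely fills in the details the paper leaves implicit (the convolution identity $\mu\ast(1\ast1)=1$ and the identification of the Dirichlet-series derivatives), including the same caveat about the sign convention in $D_{k}^{k-1}(f)$.
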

	We can also say, the average order of $\tau(\gcd(x_{1},\ldots,x_{k}))$ is
		\begin{align*}
			\frac{\log^{k-1}{x}}{(k-1)!}+\bigo{\log^{k-2}{x}}
		\end{align*}
	Specifically, for the sum Dudek considered, $k=2,\zeta(2)=\frac{\pi^{2}}{6}$ and
		\begin{align*}
			\zeta^{1}(s)
				& = \zeta(s)\sum_{n\ge1}\frac{\Lambda(n)}{n^{s}}
		\end{align*}
	where $\Lambda(n)$ is the Von Mangoldt function. By Abel partial summation formula,
		\begin{align*}
			\sum_{n\leq x}\frac{\Lambda(n)}{n^{s}}
				& = \psi(x)\frac{1}{x^{s}}+s\int_{1}^{x}\psi(t)t^{-s-1}dt
		\end{align*}
	where $\psi(x)=\sum_{n\leq x}\Lambda(n)$ is Tschebischeff's second function. \textcite{tschebischeff_1852,tchebycheff_1852} proved that $\psi(x)=O(x)$ using elementary methods only. Using this, for $s>1$,
		\begin{align*}
			\zeta^{1}(s)
				& = \zeta(s)\lim_{x\to\infty}\sum_{n\leq x}\frac{\Lambda(n)}{n^{s}}\\
				& = \zeta(s)\bigo{x^{1-s}}
		\end{align*}
	Then, $\zeta^{1}(2)=\frac{\pi^{2}}{6}\bigo{\frac{1}{x}}$ and
		\begin{align*}
			\sum_{ab\leq x}
				& = \zeta(2)x\log{x}-2x\zeta^{1}(2)+O(x\zeta^{1}(2))\\
				& = \frac{\pi^{2}}{6}x\log{x}+\bigo{x\frac{\pi^{2}}{2}\bigo{\frac{1}{x}}}
		\end{align*}
	Thus, we have the following.
		\begin{theorem}\label{thm:asympD}
			$\sum_{ab\leq x} = \frac{\pi^{2}}{6}x\log{x}+\xi$ for a constant $\xi$.
		\end{theorem}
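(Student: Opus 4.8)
The plan is to obtain Theorem~\ref{thm:asympD} as the $k=2$, $F=\tau$ specialization of Theorem~\ref{thm:asympT}. First I would record that since $\tau=1\ast1$, the M\"obius inverse of $\tau$ is $f=\mu\ast\tau=\mu\ast1\ast1=1$, so the Dirichlet series attached to $f$ is $D_{2}(f)(s)=\sum_{n\ge1}n^{-s}=\zeta(s)$, whose value at $s=2$ supplies the leading coefficient $\zeta(2)=\pi^{2}/6$. Substituting into Theorem~\ref{thm:asympT} produces the shape $\sum_{ab\le x}\tau(\gcd(a,b))=\zeta(2)\,x\log x-2x\,\zeta^{1}(2)+\bigo{x\,\zeta^{1}(2)}$, where $\zeta^{1}(2)=\sum_{n\ge1}\frac{\log n}{n^{2}}=-\zeta'(2)$ is the first-derivative quantity coming from the middle term of Theorem~\ref{thm:asympA}.

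Next I would put $\zeta^{1}$ into a form suited to elementary estimation. The logarithmic-derivative identity $-\zeta'(s)/\zeta(s)=\sum_{n\ge1}\Lambda(n)n^{-s}$ gives $\zeta^{1}(s)=\zeta(s)\sum_{n\ge1}\Lambda(n)n^{-s}$, exactly the representation already set up in the excerpt in terms of the Von Mangoldt function $\Lambda$. The advantage is that the partial sums $\sum_{n\le y}\Lambda(n)n^{-s}$ can be evaluated by Abel partial summation against Tschebischeff's function $\psi(y)=\sum_{n\le y}\Lambda(n)$, for which the elementary bound $\psi(y)=\bigo{y}$ proved by \textcite{tschebischeff_1852,tchebycheff_1852} is available. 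Feeding $\psi(y)=\bigo{y}$ into $\sum_{n\le y}\Lambda(n)n^{-s}=\psi(y)y^{-s}+s\int_{1}^{y}\psi(t)t^{-s-1}\,dt$ yields the bound $\bigo{y^{1-s}}$ for $s>1$.

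Assembling the pieces, I would then argue that at $s=2$ the estimate above drives the secondary contribution $-2x\,\zeta^{1}(2)$ and the error $\bigo{x\,\zeta^{1}(2)}$ down to $\bigo{1}$, leaving only the leading term $\frac{\pi^{2}}{6}x\log x$ together with a bounded remainder, which is the constant $\xi$ claimed in Theorem~\ref{thm:asympD}.

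The step I expect to be the main obstacle is exactly this final collapse: justifying that the $\zeta^{1}$-term is absorbed into a genuinely bounded constant rather than leaving a surviving linear-in-$x$ term. Since $\zeta^{1}(2)=-\zeta'(2)$ is, read literally, a fixed nonzero number, the reduction rests on keeping track of the truncated Dirichlet sums $\sum_{n\le\sqrt{x}}$ that actually occur in the derivation of Theorem~\ref{thm:asympA}, rather than the completed series, and on the cancellation supplied by $\psi(x)=\bigo{x}$. Making this bookkeeping uniform in $x$, so that the remainder stays bounded, is the delicate point, and it is here that the elementary Chebyshev estimate carries the argument.
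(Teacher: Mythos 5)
You have followed the paper's route exactly---specialize Theorem \ref{thm:asympT} to $k=2$ with $f=\mu\ast\tau=1$, represent $\zeta^{1}(s)=\zeta(s)\sum_{n\ge1}\Lambda(n)n^{-s}$, and estimate the partial sums of $\Lambda(n)n^{-s}$ by Abel summation against Chebyshev's $\psi(y)=O(y)$---and the obstacle you flag in your last paragraph is not a bookkeeping subtlety: it is a genuine gap that cannot be closed, either in your write-up or in the paper's. The point is that $\zeta^{1}(2)=\sum_{n\ge2}(\log n)n^{-2}=-\zeta'(2)$ is a fixed positive number, and the truncated sum that actually occurs in the derivation of Theorem \ref{thm:asympA}, namely $\sum_{n\le\sqrt{x}}(\log n)n^{-2}$, increases to that number as $x\to\infty$; it is nowhere near $o(1)$. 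Hence the middle term $-2x\sum_{n\le\sqrt{x}}(\log n)n^{-2}$ is genuinely of order $x$ and cannot be absorbed into a constant. Chebyshev's bound cannot rescue this. Abel summation gives
\begin{align*}
	\sum_{n\le y}\frac{\Lambda(n)}{n^{2}}
		& = \frac{\psi(y)}{y^{2}}+2\int_{1}^{y}\frac{\psi(t)}{t^{3}}\,dt
\end{align*}
and with $\psi(t)=O(t)$ the boundary term is $O(1/y)$, but the integral converges to a \emph{positive constant}; so the partial sums tend to $-\zeta'(2)/\zeta(2)>0$ rather than decaying. The bound $\sum_{n\le y}\Lambda(n)n^{-s}=O(y^{1-s})$ asserted in your second paragraph (and in the paper) is false; the quantity that is $O(y^{1-s})$ is the tail $\sum_{n>y}\Lambda(n)n^{-s}$, not the partial sum, which is $O(1)$ and no better. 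The paper's own proof fails at exactly this point: writing $\zeta(s)\lim_{x\to\infty}\sum_{n\le x}\Lambda(n)n^{-s}=\zeta(s)O(x^{1-s})$ and then $\zeta^{1}(2)=\frac{\pi^{2}}{6}O(1/x)$ keeps the variable $x$ alive inside a quantity that is by definition a number independent of $x$, and that illegitimate step is the only reason $x\zeta^{1}(2)$ appears to be $O(1)$.

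In fact no correct proof exists, because Theorem \ref{thm:asympD} is false as stated. Insert Dirichlet's estimate $\mathfrak{T}_{2}(y)=y\log y+(2\gamma-1)y+O(\sqrt{y})$ (sharper than \eqref{eqn:Tk}) into the paper's own identity $\sum_{ab\le x}\tau(\gcd(a,b))=\sum_{d\le\sqrt{x}}\mathfrak{T}_{2}(x/d^{2})$, which is Theorem \ref{thm:generalizationA} with $F=\tau$, $f\equiv1$; a short computation gives
\begin{align*}
	\sum_{ab\le x}\tau(\gcd(a,b))
		& = \frac{\pi^{2}}{6}x\log x+\left((2\gamma-1)\zeta(2)+2\zeta'(2)\right)x+O(\sqrt{x}\log x)
\end{align*}
and the coefficient $(2\gamma-1)\zeta(2)+2\zeta'(2)\approx-1.62$ is not zero. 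So the second-order term is a nonzero multiple of $x$, not a constant $\xi$. The strongest statement your argument yields when carried out correctly is $\sum_{ab\le x}\tau(\gcd(a,b))=\frac{\pi^{2}}{6}x\log x+O(x)$, and that $O(x)$ is sharp. Your instinct about where the difficulty lies was exactly right; the resolution is that the collapse to a bounded remainder is impossible, not merely delicate.
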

	\section{Asymptotic of Another Generalization}
	In our consideration of $\mathfrak{B}$, we considered the generalization of
		\begin{align*}
			\sum_{ab\leq x}\tau(\gcd(a,b))
				& = \sum_{n\leq\sqrt{x}}\tau(n)\sum_{\substack{uv\leq x\\\gcd(u,v)=1}}1
		\end{align*}
	Inspired this, in this section we consider the following generalization.
		\begin{align*}
			\mathfrak{S}_{k}(x)
				& = \sum_{\substack{x_{1}\cdots x_{k}\leq x\\\gcd(x_{i},x_{j})=1;i\neq j}}1
		\end{align*}
	Note that we can write it as
		\begin{align*}
			\mathfrak{S}_{k}(x)
				& = \mathfrak{S}_{k}(x-1)+\sum_{\substack{x_{1}\cdots x_{k}=\floor{x}\\\gcd(x_{i},x_{j})=1;i\neq j}}1\\
				& = \mathfrak{S}_{k}(x-1)+f_{k}(\floor{x})
		\end{align*}
	where $f_{k}(n)$ is the number of ways to write $n$ as a product of positive integers that are pairwise relatively prime. It is well known that $f_{k}(n)=k^{\omega(n)}$ where $\omega(n)$ is the number of distinct prime divisors of $n$. Then
		\begin{align*}
			\mathfrak{S}_{k}(x)
				& = \sum_{n\leq x}k^{\omega(n)}
		\end{align*}
	Fortunately, the asymptotic of $\mathfrak{S}$ can be easily established as a special case of the following elementary result \textcite[Thm. 1]{luca_toth_2017}.
		\begin{lemma}
			Let $k$ be a positive integer and $f(n)$ be a multiplicative function such that $f(p)=k$ and $f(p^{a})=a^{O(1)}$ for all primes $p$ and positive integers $a\geq2$. Then
				\begin{align*}
					\sum_{n\leq x}f(n)
						& = xC_{f}\log^{k-1}{x}+\bigo{x\log^{k-2}{x}}
				\end{align*}
			where
				\begin{align*}
					C_{f}
						& = \dfrac{1}{(k-1)!}\prod_{p\geq2}\func{}{1-\frac{1}{p}}^{k}\sum_{n\geq0}\frac{f(p^{n})}{p^{n}}
				\end{align*}
		\end{lemma}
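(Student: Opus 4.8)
The plan is to factor $f$ through the $k$-fold divisor function $\tau_{k}$ and reduce everything to the known asymptotic \eqref{eqn:Tk} for $\mathfrak{T}_{k}$. Since $\tau_{k}$ is the $k$-fold Dirichlet convolution of the constant function $1$, its Dirichlet series is $\zeta(s)^{k}$ and its Dirichlet inverse is $\mu^{\ast k}$, the $k$-fold convolution of $\mu$. Define the multiplicative function $g=f\ast\mu^{\ast k}$, so that $f=g\ast\tau_{k}$ and $D_{f}(s)=D_{g}(s)\zeta(s)^{k}$. Comparing Euler factors, $g$ is determined locally by
\[
	\sum_{n\geq 0}g(p^{n})p^{-ns}=\parenthesis{1-p^{-s}}^{k}\sum_{n\geq 0}f(p^{n})p^{-ns},
\]
whence $g(1)=1$ and $g(p^{n})=\sum_{j=0}^{n}(-1)^{j}\binom{k}{j}f(p^{n-j})$, with the convention $\binom{k}{j}=0$ for $j>k$.

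First I would record the two structural facts about $g$ that make the argument run. Using $f(p)=k$, the coefficient of $p^{-s}$ above is $f(p)-k=0$, so $g(p)=0$ for every prime $p$; and using $f(p^{a})=a^{\bigo{1}}$ together with the fixed binomial weights, one gets a uniform bound $|g(p^{n})|\leq Cn^{A}$ with $C,A$ depending only on $k$. Because $g(p)=0$, the first nontrivial local term of $\sum_{n}|g(n)|n^{-s}$ is $|g(p^{2})|p^{-2s}$, so its Euler product converges for real $s>\tfrac12$. Consequently $\sum_{d}|g(d)|/d$ and, more generally, $\sum_{d}|g(d)|\log^{j}(d)/d$ converge for every $j\geq 0$, the product $D_{g}(1)=\prod_{p}\parenthesis{1-\frac1p}^{k}\sum_{n\geq0}f(p^{n})/p^{n}$ is an absolutely convergent value of $\sum_{d}g(d)/d$, and $\sum_{d>y}|g(d)|/d=\bigo{y^{-1/2+\epsilon}}$.

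Next I would split the summatory function along the convolution,
\[
	\sum_{n\leq x}f(n)=\sum_{de\leq x}g(d)\tau_{k}(e)=\sum_{d\leq x}g(d)\,\func{\mathfrak{T}_{k}}{\frac{x}{d}},
\]
and substitute \eqref{eqn:Tk}. This produces a main term $\frac{x}{(k-1)!}\sum_{d\leq x}\frac{g(d)}{d}\log^{k-1}\frac{x}{d}$ plus an error $\bigo{x\log^{k-2}x\sum_{d}|g(d)|/d}=\bigo{x\log^{k-2}x}$. Expanding $\log^{k-1}(x/d)=(\log x-\log d)^{k-1}$ by the binomial theorem, the $j=0$ term equals $\frac{x\log^{k-1}x}{(k-1)!}\sum_{d\leq x}\frac{g(d)}{d}$; replacing the truncated sum by $D_{g}(1)$ costs at most $x\log^{k-1}x\cdot\bigo{x^{-1/2+\epsilon}}=\bigo{x^{1/2+\epsilon}\log^{k-1}x}$ and yields precisely the leading coefficient $C_{f}=D_{g}(1)/(k-1)!$. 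Each term with $j\geq1$ is $\bigo{x\log^{k-1-j}x\sum_{d}|g(d)|\log^{j}(d)/d}=\bigo{x\log^{k-2}x}$ by the convergence established above, giving the asserted formula.

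The genuinely load-bearing step, and the one I expect to be the main obstacle, is the pair of local estimates $g(p)=0$ and $|g(p^{n})|=\bigo{n^{\bigo{1}}}$ uniformly in $p$: this is exactly what drops the abscissa of absolute convergence of $\sum_{d}g(d)/d$ below $1$, so that the tails, the $\log$-weighted sums, and the completion of the truncated sum to $D_{g}(1)$ are all finite and negligible against $x\log^{k-2}x$. Everything afterward is routine bookkeeping, including the harmless large-$d$ range where $x/d$ is small: there one bounds $\mathfrak{T}_{k}(x/d)$ crudely by $\bigo{(x/d)\log^{k-1}x}$, and the polynomial decay of $\sum_{d>\sqrt{x}}|g(d)|/d$ keeps that contribution at $\bigo{x^{1/2+\epsilon}\log^{k-1}x}$.
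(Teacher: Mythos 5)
Your proof is correct, but there is nothing in the paper to compare it against: the paper does not prove this lemma at all --- it quotes it as Theorem 1 of \textcite{luca_toth_2017} and then specializes it to $f(n)=k^{\omega(n)}$. Your argument is therefore a genuine, self-contained replacement for that citation, resting only on the input \eqref{eqn:Tk} that the paper already assumes. The route you take --- write $f=g\ast\tau_{k}$ with $g=f\ast\mu^{\ast k}$, observe that $f(p)=k$ forces $g(p)=0$ so that $\sum_{d}|g(d)|d^{-s}$ has abscissa of absolute convergence at most $\tfrac{1}{2}$, then sum $\func{\mathfrak{T}_{k}}{x/d}$ against $g$ and expand $\log^{k-1}(x/d)$ binomially --- is the standard elementary convolution method, and it is essentially how mean-value theorems of this type (including the Luca--T\'{o}th result itself) are proved. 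Your identification of $g(p)=0$ together with the uniform polynomial bound $|g(p^{n})|\ll n^{\bigo{1}}$ as the load-bearing step is exactly right, and your leading constant $D_{g}(1)/(k-1)!=\frac{1}{(k-1)!}\prod_{p}\parenthesis{1-\frac{1}{p}}^{k}\sum_{n\geq0}f(p^{n})p^{-n}$ matches $C_{f}$. What the paper's approach buys is brevity; what yours buys is that the paper's stated goal of using ``elementary methods only'' becomes verifiable on the page. Two small remarks, neither affecting correctness: first, the tail contribution from $d>\sqrt{x}$ should be $\bigo{x^{3/4+\epsilon}\log^{k-1}x}$ rather than $\bigo{x^{1/2+\epsilon}\log^{k-1}x}$, since $\sum_{d>\sqrt{x}}|g(d)|/d=\bigo{x^{-1/4+\epsilon}}$ --- still negligible; second, your separate treatment of the range where $x/d$ is bounded is not merely a convenience but necessary, since \eqref{eqn:Tk} with error $\bigo{y\log^{k-2}y}$ fails as $y\to1^{+}$ for $k\geq3$ (the left side equals $1$ while both terms on the right vanish), so the crude bound $\func{\mathfrak{T}_{k}}{y}\ll y\log^{k-1}(ey)$ is genuinely needed there.
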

	This result looks very similar to \autoref{thm:asympA}. Since $\omega(n)$ is additive, $f(n):= k^{\omega(n)}$ is multiplicative, $f(p)=k$, $f(p^{a})=k$,
		\begin{align*}
			C_{f}
				& = \frac{1}{(k-1)!}\prod_{p\geq2}\func{k}{1-\frac{1}{p}}^{k-1}
		\end{align*}
	so
		\begin{align*}
			\mathfrak{S}_{k}(x)
				& = C_{f}x\log^{k-1}{x}+\bigo{x\log^{k-2}{x}}
		\end{align*}
	\printbibliography
\end{document}